\def \supcomn { C_0 ^{\infty} (\erre^{N})}
\newcommand{\erre} {{\mathbb {R}}}
\newcommand {\elle} {{\mathcal {L}}}
\def \effe {{\mathscr {F}}}
\def \acca{{\mathcal {H}}}
\def\erren{{\erre^{ {N} }}}
\def\erreu{{\erre^{ {N+1} }}}
\def\inn{\mbox{ in }}
\def\andd{ \quad\mbox{ and } \quad }
\newcommand{\tende}{\rightarrow}
\newcommand{\ttende}{\longrightarrow}
\newcommand{\enne} {\mathbb{N}}
\newcommand{\frecciaf} {\longmapsto}
\newcommand{\inte} {\cap}
\newcommand\partiali{\partial_{x_i}}
\newcommand\partialij{\partial_{x_i x_j}}
\newtheorem{theorem}{Theorem}[section]
\newtheorem{proposition}[theorem]{Proposition}
\theoremstyle{remark}
\theoremstyle{definition}
\numberwithin{equation}{section}
\title[On the Dirichlet Problem for hypoelliptic evolution equations]{On the Dirichlet Problem for hypoelliptic evolution equations: \\ Perron-Wiener solution \\and a cone-type criterion}
\author{Alessia E. Kogoj}
\address {Dipartimento di Ingegneria dell'Informazione, Ingegneria Elettrica e Matematica Applicata, Universit\`a degli Studi di Salerno, Via Giovanni Paolo II, 132, IT-84084 Fisciano (SA) - Italy}
\email{akogoj@unisa.it}
\subjclass[2010]{35H10; 35K70; 35K65; 31D05; 35D99; 35J25}
\keywords{Dirichlet problem, Perron-Wiener solution, Boundary behavior of Perron-Wiener solutions, Exterior cone criterion, Hypoelliptic operators, Potential theory}
\begin{document}

\begin{abstract} 
We show how to apply harmonic spaces potential theory
in  the study of the Dirichlet problem for a general class of
evolution hypoelliptic partial differential equations of second order.
We construct Perron-Wiener solution and we provide a sufficient condition 
 for the regularity of the boundary points.
Our criterion extends and generalizes  the classical parabolic-cone
criterion for the Heat equation due to Effros and Kazdan.

\end{abstract}
\maketitle
\section{Introduction}
The aim of this paper is to prove the existence of a generalized solution in the sense of Perron-Wiener to the Dirichlet problem and to provide a sufficient condition for the regularity of the boundary points for a wide class of evolution equations. 

More precisely, we consider second order partial differential operators of the following type

\begin{equation} \label{operatore} 
  \elle = \sum_{i,j=1}^N a_{ij}(z) \partialij  + 
\sum_{i=1}^N b_i(z) \partiali  - \partial_t,
\end{equation}

in a strip 
$$S=\{ z=(x,t)\in\erreu \ |\ x\in\erren, \ T_1<t<T_2\},$$
with $-\infty\le T_1<T_2 \le + \infty.$

The coefficients $a_{ij}=a_{ji}$ and $b_i$ are smooth and the characteristic form of the operator is nonnegative definite and non totally degenerate, i.e., 
$$\sum_{i,j=1}^N a_{ij}(z) \xi_i\xi_j \geq 0, \qquad \forall z  \in S, \ \forall \xi=(\xi_1,\ldots,\xi_N) \in \erre^N,$$

and 
$$\sum_{i=1}^N a_{ii}(z) >0 \qquad \forall z \in S.$$ 

Finally, we assume the {\it hypoellipticity} of  $\elle - \beta$ and of $\elle^*$,  for every constant $\beta \ge 0$, and 
the existence of a well-behaved  {\it fundamental solution} $\Gamma$ for $\elle$, $$(z,\zeta) \frecciaf \Gamma(z,\zeta),$$
satisfying the following properties:

\begin{itemize}
\item[{\rm(i)}]
$\Gamma(\cdot,\zeta)$
belongs to  $L^1_{\rm loc}(S)$ and $\elle(\Gamma(\cdot,\zeta)) = - \delta_{\zeta}$, where  $\delta_{\zeta}$ denotes the Dirac measure at $\{\zeta\}$,  for every  $\zeta\in S$.

 \item[{\rm(ii)}] For every  $\varphi\in\supcomn$ and for every $(x_0,\tau)\in S$, 

\[ \int_{\erre^{N}}
  \Gamma (x,t, \xi,\tau )\varphi(\xi)\ d\xi \tende  \varphi(x_0), \quad \mbox{ as } (x,t) \tende (x_0,\tau),\ t>\tau.\]

\item[{\rm(iii)}] $\Gamma\in
C^{\infty}\Big(\{(z,\zeta)\in\erre^{N+1}\times \erre^{N+1}\ |\
z\neq\zeta\}\Big).$

\item[{\rm(iv)}]$\Gamma \geq 0$ and $\Gamma (x,t,\xi,\tau)>0$ if
and only if  $t>\tau$.  Moreover, for every fixed $z\in S$, $\limsup_{\zeta \tende z} \Gamma(z,\zeta) = \infty$.
\item[{\rm(v)}] $\Gamma(z,\zeta)\tende 0$ for $\zeta\tende\infty$ uniformly for $z\in K$, compact set of $S$, and, analogously, 
$\Gamma(z,\zeta)\tende 0$ for $z \tende\infty$ uniformly for $\zeta \in K$, compact set of $S$.

\item[{\rm(vi)}]  $\exists\, C>0$ such that for any  $z=(x,t)\in S$  we have $$\int_{\erre^N} \Gamma(z; \xi, \tau) \ d \xi \le C \mbox{\quad if\quad  } t>\tau.$$

\end{itemize}

We observe that the Kolmogorov-Fokker-Planck-type operators studied in \cite{garofalo_lanconelli_kolmogorov} and in \cite{lanconelli_polidoro_1994}, the 
ultraparabolic operators studied in \cite{kogoj_lanconelli_2004} and  in  \cite{link}, and the diffusion operators studied in \cite{BBLU} and \cite{lanconelli_uguzzoni_2010} belong to the class of operators considered in this paper.

The paper is organized as follows. Section 2  is devoted to briefly recall the notions and results from  Potential Theory that we need
to study the Dirichlet problem for our class of operators. 
In Section 3, following the techniques used by Lanconelli and Uguzzoni in \cite{lanconelli_uguzzoni_2010}, we show that the set of the solutions $u$ of $\elle u=0$ in $\Omega$  is a $\beta$-harmonic space satisfying the axioms of the Doob Potential Theory. In this way, in Section 4 we derive the existence of a {\it generalized solution in the sense of Perron-Wiener} in our setting.
We also recall a classical {\it balayage}-criterion that we will use to prove our main theorem.
In Section 5 we state and prove the main theorem: a criterion of boundary regularity which bases only on the behavior of the integral of the fundamental solution on a particular subset of $\erre^N$. As a consequence, in Section 6, we deduce  cone-type criteria inspired to the parabolic-cone 
criterion for the heat equation due to Effros and Kazdan \cite{effros_kazdan_1970} \cite{effros_kazdan_1971}.  Our criteria extend and generalize also the cone-type condition proved in \cite[Theorem 4.11]{lanconelli_uguzzoni_2010} (see also \cite{uguzzoni_cono}) for a class of hypoelliptic diffusion equations under the assumptions of doubling condition and segment property for an underlying distance and Gaussian bounds of the fundamental solution.
At the best of our knowledge, the only cone-type criterion for Kolmogorov-Fokker-Planck-type operators present in literature is related to the prototype of the Kolmogorov operator in $\erre^3$  and it is in the paper \cite{scornazzani_cono} where, for the same operator, Scornazzani proved a  Landis-Wiener-type criterion.  We would like to emphasize that,  in our general framework, i.e., for evolution equations with underline sub-Riemannian structures, the problem of characterizing the regularity of the boundary points in terms of Wiener-type series is still widely open. Nowadays, there are  only  few results in literature: the one related to the Kolmogorov equation in $\erre^3$ due to Scornazzani \cite{scornazzani_cono} and the Wiener criterion related to the heat operator on the Heisenberg group due to Garofalo and Segala \cite{garofalo_segala_wiener}. Very recently, for the operators studied in \cite{lanconelli_uguzzoni_2010}, Lanconelli, Tralli and Uguzzoni in \cite{ltu_2016} have given  necessary and sufficient regularity conditions in terms of Wiener-type series; however,  these criteria do not exactly characterize the boundary points.

\section{Potential Theory on Harmonic spaces: some recalls}

In this section, we recall some basic definitions and results
from the Potential Theory  that  will allow us to apply the Perron-Wiener
method to solve the Dirichlet  problem related to $\elle$.
We refer to refer to  \cite[pp. 22--23]{CC} for a historical note on the Perron-Wiener solution and to \cite[chapter 6]{BLU}, \cite{CC} and \cite{bauer}
for a detailed description of the general theory of {\it harmonic spaces}. 

Throughout this Section  $(X, \mathcal{T})$ will denote a topological Hausdorff space,
locally connected and locally compact. We also assume the topology $\mathcal{T}$ has a
countable basis. 

\subsection{Sheafs of functions and harmonic sheafs in $X$} 
\mbox{}\\
Let $V$ be any open subset of $X$. We denote by $\overline\erre$ the set $\mbox{$\erre \cup \{\infty, -\infty\}$ 
and by $\overline\erre^V$}$ the set of functions $\mbox{$u: V \longrightarrow \overline\erre$}$.  Moreover $C(V,\erre)$  is the vector space of real continuous functions defined on $V$. 
A map 
$$\effe : \mathcal{T} \ttende \bigcup_{V\in\mathcal{T}} \overline\erre^V$$ 

is a {\it sheaf of functions} in $X$ if
\begin{itemize}
\item[$(i)$] $\effe(V)\subseteq  \overline\erre^V \quad \forall\ V\in \mathcal{T};$
\item[$(ii)$] $V_1, V_2 \in \mathcal{T}$, $V_1 \subseteq V_2$, $ u\in \effe(V_2)$ $\implies$ $u|_{V_1} \in 
\effe({V_1})$;

\item[$(iii)$] $V_\alpha   \in \mathcal{T}\  \forall \alpha \in \mathcal{A},  u: \bigcup_{\alpha \in A} V_\alpha 
\longrightarrow \overline\erre$, $u|_{V_\alpha} \in \effe({V_\alpha}) \implies u\in \effe({ \bigcup_{\alpha\in 
\mathcal{A}}}V_\alpha).$ 

\end{itemize}

\noindent When $\effe(V)$ is a linear subspace of $C(V,\erre)$ for every $V\subseteq X$, we say that the sheaf of 
functions $\effe$ on $V$ is {\it harmonic} and we denote it by  $\acca(X).$ The functions belonging to $\acca(X)$ will be called {\it harmonic functions}.
\subsection{Regular open sets and harmonic measures} 
\mbox{}\\
Let $\acca$ be a harmonic sheaf on $X$. We say that a bounded open set $V\subseteq X$ is
{\it $\acca$-regular}
if:
\begin{itemize}

\item[$(i)$] $\overline{V}\subseteq X$ is compact and $\partial V \neq \emptyset$;

\item[$(ii)$] for every continuous function $\varphi : \partial V \longrightarrow \erre$, there exists a unique
function, $h_\varphi^V$,
in $\acca(V)$ and continuous in $\overline V$, such that  
$$h_\varphi^V|_{\partial V}=\varphi.$$
\item[$(iii)$] if $\varphi\geq 0$ then $h_\varphi^V \geq 0.$ 

\end{itemize}
From $(ii)$ and $(iii)$ it follows that, for every  regular set $V$ and for every $x \in V$, the map 
$$C(\partial V) \ni \varphi \longmapsto h_\varphi^V(x) \in \erre$$ is linear, continuous  and non-negative.
Thus, the Riesz representation theorem, 
implies that, for every  regular set $V$ and for every $x \in V$, there exists a {\it regular Borel measure}, that we 
denote by $\mu^V_x$, supported in $\partial V$, 
such that 
$$h_\varphi^V (x) = \int_{\partial V} \varphi(y) \ d \mu^V_x(y) \qquad \forall\ \varphi\in C(\partial V).$$ 
The measure $\mu^V_x$ is called the {\it $\acca$-harmonic measure} related to $V$ and $x$.
\subsection{Iperharmonic functions, Superharmonic functions, Potentials} \mbox{}\\
A function $u: X \ttende ]-\infty, \infty]$ is called {\it  $\acca$-iperharmonic in $X$} if 
 \begin{itemize}

\item[$(i)$] $u$ is lower semi-continuous;

\item[$(ii)$] for every regular set $V$, $\overline V\subseteq X$, and  for every $\varphi \in C( \partial V, 
\erre)$, 
$\varphi \le u|_{\partial V} $,  it follows  $ u \geq h_\varphi^V$ in $V;$
\end{itemize}
If $u$ is iperharmonic and the set $\{ x\in X \ | \ u(x) < \infty \}$ is dense in $X$, then $u$ is called {\it superharmonic}.

We will denote by  $\acca^*(X)$  the family  of the iperharmonic functions on $X$ and by $\overline{\acca}(X)$  the family  of the superharmonic functions.

A  {\it $\acca$-potential} in $X$ is a nonnegative superharmonic function such that any nonnegative harmonic minorant is identically zero.

\subsection{Doob $\beta$-harmonic spaces} \label{axioms}
\mbox{}\\
We say that a harmonic sheaf $\acca(X)$ is a {\it $\beta$-harmonic space satisfying the Doob convergence property}  if it verifies the following axioms.

 \begin{itemize}

\item[(A1)]  { \it Positivity axiom:}\\
For every $x\in X$, there exists a open set $V\ni z$ and a function $u\in \acca(V)$  
such that $u(x)>0$.

\item[(A2)]  { \it Doob convergence axiom:}\\
The limit of any increasing sequence of $\acca$-harmonic functions in a open set $V\subseteq X$ is $\acca$-harmonic whenever it is finite in a dense subset of $\Omega$. 

\item[(A3)]  { \it Regularity axiom:}\\   There is a basis of the euclidean topology of $X$ formed by   $\acca$-regular 
sets.

\item[(A4)]  { \it Separation axiom:}\\  
For every $y$ and $z$ in $X$, $y\neq z$, there exist two  $\acca$-potentials $u$ and $v$ in $X$ such that 
$u(y)v(z)\neq u(z)v(y).$
\end{itemize}

\subsection{Dirichlet problem in harmonic space}\mbox{}\\
Let $\Omega$ be an open set of $X$, with compact closure and non-empty boundary, and $\varphi: \partial\Omega\ttende \overline\erre.$ 
We call {\it generalized Dirichlet problem for the harmonic sheaf $\acca$ in the open set $\Omega$ with boundary data $\varphi$}, the problem of finding a function $u\in \acca(\Omega)$ such that 
$$\lim_{x\tende y} u (x)=\varphi(y)\quad\forall\ y\in\partial\Omega.$$

In this case we say that $u$ solves the problem 
\begin{equation} \label{pd}\tag{$\acca$-D}
\begin{cases} u\in  \acca(\Omega)   \\
  u|_{\partial \Omega} = \varphi.\end{cases}
\end{equation}
If $\varphi\in C(\partial\Omega)$ (and we are in a Doob $\beta$-harmonic space), the function 
$$H_\varphi^\Omega:= \inf\{ u\in {\acca^*} (\Omega) \ | \ \liminf_{z\tende \zeta} u(z) \geq \varphi(\zeta)\quad \forall\ \zeta\in \partial\Omega\}$$
belongs to the harmonic sheaf $\acca(\Omega)$ (see \cite[Theorem 2.4.2]{CC}) and it is called the {\it generalized solution in the sense of Perron-Wiener} to the Dirichlet problem ($\acca$-D). 

A point $z_0\in\partial\Omega$ is called {\it $\acca$-regular for $\Omega$} if 
 $$\lim_{z\tende z_0} H_\varphi^\Omega(z)=\varphi(z_0)\quad\forall\ \varphi\in C(\partial\Omega).$$

Of course, if (and only if) every point of $\partial\Omega$ is $\acca$-regular the function $H_\varphi^\Omega$ is the (unique) solution  to 
\begin{equation*} 
\begin{cases} u\in  \acca(\Omega)\inte C(\overline\Omega)   \\
  u|_{\partial \Omega} = \varphi\end{cases}
\end{equation*}
for every $\varphi(\partial \Omega).$

\section{The harmonic space of the solution of $\elle u=0.$}\label{potentialelle}
In this section, we show that the set of the solutions of the equation $\elle u=0$ is  $\beta$-harmonic space in $S$  satisfying the Doob convergence property.

For every open set $\Omega\subseteq S$ we set
\begin{equation*} \acca(\Omega): = \{ u\in C^\infty(\Omega) \ | \ \elle u=0 \},
\end{equation*}
where  $\elle$ is the operator  \eqref{operatore}.   Then, 
$$ \Omega \frecciaf \acca(\Omega)$$ is a  {\it harmonic sheaf of functions}  in $S$.

The assumptions on $\elle$ and on its fundamental solution allow us to prove the following theorem. 
\begin{theorem}\label{dueuno} Let $S'=\erre^N\times ]T'_1,T'_2[$ be a strip of $\erre^{N+1}$ where $T_1<T'_1<T'_2<T_2$. Then $\acca (S')$ is a
Doob  $\beta$-harmonic space.
\end{theorem}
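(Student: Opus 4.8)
The plan is to verify the four Doob axioms (A1)--(A4) for the harmonic sheaf $\acca(S')$, exploiting the hypoellipticity of $\elle-\beta$ and $\elle^*$ together with properties (i)--(vi) of the fundamental solution $\Gamma$. Throughout I would follow the strategy of Lanconelli--Uguzzoni \cite{lanconelli_uguzzoni_2010}, adapting it to the present axiomatic framework for $\Gamma$.

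\textbf{Positivity and regularity axioms.} For (A1), the constant function $u\equiv 1$ solves $\elle u=0$ since $\elle$ annihilates constants (it has no zero-order term), so $1\in\acca(V)$ for every open $V$ and positivity is immediate. For (A3), the natural candidates for a basis of $\acca$-regular sets are small \emph{$\elle$-cylinders} or Euclidean balls with a ``past'' cap, on which one can solve the Dirichlet problem classically. Here hypoellipticity of $\elle-\beta$ (for $\beta=0$) gives smoothness of solutions, and one must construct, for continuous boundary data $\varphi$, a solution continuous up to $\partial V$; a Poisson-type kernel built from $\Gamma$ via (ii) handles the boundary behavior on the lateral/past part, while on the ``top'' one uses a barrier. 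The maximum principle for $\elle$ (which follows from the sign conditions on the characteristic form and the fact that $\partial_t$ appears with the stated sign, giving a parabolic-type maximum principle on a strip) yields uniqueness and the nonnegativity in (iii). One then checks these regular sets form a basis of the Euclidean topology of $S'$.

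\textbf{Doob convergence axiom.} For (A2), let $(u_k)$ be an increasing sequence in $\acca(V)$ with $u:=\sup_k u_k$ finite on a dense set. By the mean-value-type representation over $\acca$-regular sets $W\Subset V$ (using the harmonic measures $\mu^W_x$) and monotone convergence, $u$ satisfies the same mean-value property; local boundedness on a dense set plus a Harnack-type inequality (obtained from the positivity of $\Gamma$ for $t>\tau$, property (iv), via the standard chain argument) forces $u$ to be locally bounded, hence finite everywhere, and then hypoellipticity upgrades $u$ to a smooth solution of $\elle u=0$. This is where properties (iv) and (vi) do the real work.

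\textbf{Separation axiom.} For (A4) I would exhibit enough $\acca$-potentials to separate points. The canonical potentials are the \emph{Green potentials} $z\mapsto \int \Gamma(z,\zeta)\,d\mu(\zeta)$ for suitable measures $\mu$ supported in $S'$; property (vi) ensures these are finite (locally bounded), property (i) gives $\elle(\Gamma(\cdot,\zeta))=-\delta_\zeta\le 0$ so they are superharmonic, and property (v) together with the strip geometry forces any nonnegative harmonic minorant to vanish (a solution on the strip that is dominated by a potential decaying at infinity and vanishing on the ``initial'' face must be $0$, by the maximum principle). Taking point masses $\mu=\delta_{\zeta_1},\delta_{\zeta_2}$ with $\zeta_1\neq\zeta_2$, the strict positivity statement $\Gamma(z,\zeta)>0\iff t>\tau$ in (iv) lets one choose evaluation points $y,z$ so that $\Gamma(y,\zeta_1)\Gamma(z,\zeta_2)\neq\Gamma(z,\zeta_1)\Gamma(y,\zeta_2)$; a small amount of care is needed because $\Gamma$ vanishes on half of space-time, so one must place the test points in the correct time-order relative to $\zeta_1,\zeta_2$, which is possible precisely because we work inside the open strip $S'$ with $T_1<T_1'<T_2'<T_2$ leaving room on both sides.

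\textbf{Main obstacle.} I expect the genuinely delicate step to be (A3): producing a \emph{basis} of $\acca$-regular open sets and, in particular, showing solvability of the classical Dirichlet problem on them with continuity up to the boundary. Unlike the elliptic case, the parabolic/ultraparabolic degeneracy means not every small ball is regular, and one must carefully choose the shape of the sets (cylinders adapted to the time direction, or Euclidean balls intersected with a backward time half-space) and build barriers at the ``characteristic'' boundary points using $\Gamma$ and its integral bounds (vi). The remaining axioms, once the regular sets and the maximum/Harnack machinery are in place, follow the now-standard template for harmonic spaces associated with hypoelliptic operators.
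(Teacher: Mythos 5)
Your plan identifies the right checklist (A1)--(A4), but the step you treat as essentially routine, the separation axiom, contains a genuine gap. Your argument for (A4) uses only the support half of hypothesis (iv) ($\Gamma(x,t,\xi,\tau)>0$ iff $t>\tau$): placing the ``test points in the correct time-order relative to $\zeta_1,\zeta_2$'' can only distinguish two points whose time coordinates differ. (Note also that your quantifiers are inverted: the two points to be separated are given, and you must choose the potentials, not the evaluation points.) For two distinct points on the same time level, $z_1=(x_1,t_0)$, $z_2=(x_2,t_0)$ with $x_1\neq x_2$, every admissible pole $\zeta=(\xi,\tau)$ with $\tau<t_0$ makes \emph{both} $\Gamma(z_1,\zeta)$ and $\Gamma(z_2,\zeta)$ strictly positive, and nothing in your argument rules out that the determinant $\Gamma(z_1,\zeta)\Gamma(z_2,\zeta')-\Gamma(z_2,\zeta)\Gamma(z_1,\zeta')$ vanishes for all choices of poles. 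This is exactly where the paper invokes the other half of (iv), $\limsup_{\zeta\to z_1}\Gamma(z_1,\zeta)=\infty$: taking poles $\zeta_k\to z_1$ with times below $t_0$, one has $\Gamma(z_1,\zeta_k)\to\infty$ while $\Gamma(z_2,\zeta_k)\to\Gamma(z_2,z_1)<\infty$ by (iii), so for fixed $j$ the determinant built from the potentials $\Gamma(\cdot,\zeta_j)$, $\Gamma(\cdot,\zeta_k)$ tends to $-\infty$ as $k\to\infty$ and is therefore nonzero for some $j,k$. Without this blow-up ingredient your (A4) does not close.

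The other two axioms are also left at the level of a programme precisely where the work lies, and your proposed routes are shakier than the ones the paper uses. For (A3) the paper does not build Poisson kernels or barriers at all: the Minimum Principle, the hypoellipticity of $\elle-\beta$ for \emph{every} $\beta\geq 0$ and of $\elle^*$, and the non-total degeneracy feed directly into Bony's theorem (Corollaire 5.2 in \cite{bony_1969}; see also \cite[Proposition 7.1.5]{BLU}), which yields a basis of $\elle$-regular sets -- this is the very reason the hypoellipticity of $\elle-\beta$ for all $\beta\ge 0$ is assumed. Constructing barriers at characteristic boundary points of cylinders for general degenerate operators is essentially the boundary-regularity problem the paper's main theorem is about, so your plan for (A3) risks circularity. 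Similarly, for (A2) a Harnack inequality ``from positivity of $\Gamma$ via the standard chain argument'' is not available for degenerate evolution operators (propagation/attainable-set obstructions); the paper instead uses Bony's weak Harnack inequality (Th\'eor\`eme 7.1 in \cite{bony_1969}), which holds under the standing hypoellipticity assumptions and suffices for the Doob convergence axiom.
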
 

\begin{proof} We follow verbatim the lines of the proof of Theorem 3.9 in \cite{lanconelli_uguzzoni_2010}. 
Here, for the convenience of the reader, we repeat the main points (referring to  \cite[Section 3]{lanconelli_uguzzoni_2010} for their proofs). 
Let us start recalling a Minimum Principle for $\elle$ (see \cite[Proposition 3.1]{lanconelli_uguzzoni_2010}) .
\begin{proposition} \label{duedue} Let $\Omega$ be an open set, $\overline\Omega \subseteq S.$ For any $T\in ]T_1,T_2[$ we set
$$\Omega_T = \Omega \inte \{(x,t)\ | \ t<T\} \andd \partial_T \Omega = \partial \Omega \inte \{(x,t)\ | \ t\le T\}.  $$
Let $u$ be a $C^2$ function in $\Omega$ such that 
\begin{itemize}
\item[$(i)$] $\elle u \le 0$ in $\Omega$;
\item[$(ii)$] $\liminf_{\Omega_T \ni z\tende \zeta} u(z) \geq 0$ for every $\zeta \in \partial_T\Omega$;
\item[$(iii)$] $\liminf_{\Omega_T \ni z\tende \infty} u(z) \geq 0$ if $\Omega_T$ is not bounded. 
\end{itemize} 
Then, $u\geq 0$ in $\Omega_T$.
 \end{proposition} 
 As a consequence, 
for every $V\subseteq S$ $\elle$-regular, the (unique) function $h_\varphi^V$
in $\acca(V)$, continuous in $\overline V$ and such that $h_\varphi^V|_{\partial V}=\varphi$,  is non-negative if  $\varphi\geq 0$. 
Therefore, for every  regular set $V$ and for every $x \in V$, the map 
$$C(\partial V) \ni \varphi \longmapsto h_\varphi^V(x) \in \erre$$ is linear, continuous  and non-negative functional, and 
we can write 
$$h_\varphi^V (x) = \int_{\partial V} \varphi(y) \ d \mu^V_x(y) \qquad \forall\ \varphi\in C(\partial V),$$ 

where $\mu^V_x$ is the harmonic measure related to $V$ and $x$. Now, from the Minimum Principle,  the hypoellipticity, the non totally degeneracy of the operator $\elle$, 
making use of a standard argument (see \cite[Corollarie 5.2]{bony_1969}, see also \cite[Proposition 7.1.5]{BLU}), it follows that the family of the $\elle$-regular sets
$$\{V\subseteq S\ | \ \mbox{$V$ open and $\elle$-regular} \}$$ 
is a basis of the euclidean topology of $S$, thus the {\it regularity axiom} is satisfied.

The  { \it Doob convergence axiom}  is a consequence of a weak Harnack inequality  due to Bony (see \cite[Theoreme 7.1]{bony_1969}); see also \cite[Proposition 7.4]{kogoj_lanconelli_2004}).

The {\it positivity axiom} (A1) is plainly verified. Indeed every constant function belongs to $\acca(\Omega)$.

We are left to prove that the {\it separation axiom } (A4) holds in our setting.

For every fixed $\zeta_0\in S$ the function 
$$z\frecciaf \Gamma(z,\zeta_0)  \mbox{ is a }  \acca\mbox{-potential}.$$ Indeed  $\Gamma$ is nonnegative and $\acca$-superharmonic in $S$.  Moreover, if $h\in\acca(S)$ and $h\le\Gamma(\cdot,\zeta_0)$, then $h\le 0$ in $S$ (see \cite[Proposition 3.4]{lanconelli_uguzzoni_2010}). 
This result, together with property $(iv)$ of $\Gamma$, allows us to verify the separation axiom:

For every $z_1$ and $z_2$ in $S'$, $z_1\neq z_2$, there exist two  $\acca$-potentials $u$ and $v$ such that 
$$u(z_1)v(z_2)\neq u(z_2)v(z_1).$$

Thanks to property $(iv)$, we can find a sequence $(\zeta_j)$ such that  $\zeta_j \ttende z_1$ such that $$\Gamma(z_1,\zeta_j)\ttende \infty\mbox{ for }  j\ttende \infty,$$ 
where $\zeta_j=(\xi_j,\tau_j)$ with $\tau_j<t_1.$ Now, we set 
$$u_j=\Gamma(\cdot, \tau_j).$$
$u_j$ is a $\acca$-potential and, for every $j\in \enne$, 
\begin{equation*} \begin{split} \lim_{k\tende\infty}& \left(u_j(z_1)u_k(z_2) - u_j(z_2)u_k(z_1)\right) \\
& = \Gamma(z_1,\zeta_j)\Gamma(z_2,z_1) - \Gamma(z_2,z_j) \lim_{k\tende\infty} \Gamma(z_1,\zeta_k) \\
&=  - \infty.
 \end{split}\end{equation*}
 Hence, there exist $j,k\in\enne$ such that 
 $$u(z_1)v(z_2)\neq u(z_2)v(z_1),$$
 and the proof is complete.
\end{proof}

From Theorem \ref{dueuno} and the theory of harmonic space, we obtain an extension of Proposition \ref{duedue}
(see \cite[Proposition 3.10]{lanconelli_uguzzoni_2010}).

\begin{proposition} \label{duequattro} Let $\Omega$ be an open set, $\overline\Omega \subseteq S,$
$$\Omega_T = \Omega \inte \{(x,t)\ | \ t<T\} \mbox{ and }\   \partial_T \Omega = \partial \Omega \inte \{(x,t)\ | \ t\le T\}, \  \mbox{for any }   T\in ]T_1,T_2[.$$

Let $u$ be a superharmonic function in  $\overline{\acca}(\Omega_T)$, $T\in ]T_1,T_2[$, such that 
\begin{itemize}
\item[$(i)$] $\liminf_{\Omega_T \ni z\tende \zeta} u(z) \geq 0$ for every $\zeta \in \partial_T\Omega$;
\item[$(ii)$] $\liminf_{\Omega_T \ni z\tende \infty} u(z) \geq 0$ if $\Omega_T$ is not bounded. 
\end{itemize} 
Then, $u\geq 0$ in $\Omega_T$.
 \end{proposition}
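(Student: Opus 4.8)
The statement is a ``superharmonic'' upgrade of the $C^2$ Minimum Principle of Proposition \ref{duedue}, and the natural route is to reduce to it by an approximation argument inside the harmonic space. The plan is to fix $T \in \,]T_1,T_2[$ and work in $\Omega_T$, which by Theorem \ref{dueuno} sits inside a Doob $\beta$-harmonic space (choosing $T_1 < T_1' < T' _2 < T_2$ with $\Omega_T \subseteq S' = \erre^N \times \,]T_1',T_2'[$). First I would recall that in a Doob $\beta$-harmonic space every superharmonic function $u \in \overline{\acca}(\Omega_T)$ is, on every $\acca$-regular set $V$ with $\overline V \subseteq \Omega_T$, the increasing limit of its harmonic modifications; equivalently, $u$ is the supremum of the continuous harmonic functions (indeed of the $h_\varphi^V$) that it dominates, and it satisfies the super-mean-value inequality $u(x) \geq \int_{\partial V} u \, d\mu_x^V$ for all such $V$ and all $x \in V$. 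This is exactly where the general theory of harmonic spaces (e.g.\ \cite[Chapter 6]{BLU}, \cite{CC}) replaces the pointwise computation with $\elle$.

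Next I would carry out the actual minimum-principle argument. Suppose, for contradiction, that $u(z_0) < 0$ at some $z_0 = (x_0, t_0) \in \Omega_T$. Using lower semicontinuity of $u$ together with hypotheses $(i)$ and $(ii)$, one shows that the set where $u$ attains values $\leq u(z_0)$ is nonempty and that $u$ cannot ``leak'' its negativity to $\partial_T \Omega$ or to infinity: more precisely, set $m = \inf_{\overline{\Omega_{t_0}}} u$, where the infimum is taken over the closure relative to the strip, and observe that by $(i)$--$(ii)$ any approach to the parabolic boundary $\partial_{t_0}\Omega$ or to infinity gives $\liminf u \geq 0 > m$, so $m < 0$ is attained at an interior point $z^* = (x^*, t^*)$ with $t^* \leq t_0$. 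Then I would take an $\acca$-regular neighbourhood $V \ni z^*$ with $\overline V \subseteq \Omega_{t^*+\eps} \cap \Omega_T$ (possible by the regularity axiom (A3)); the super-mean-value inequality gives $m = u(z^*) \geq \int_{\partial V} u \, d\mu_{z^*}^V \geq m$, forcing $u \equiv m$ on $\partial V$ up to a $\mu_{z^*}^V$-null set, and then a standard connectedness/propagation argument (relying on the strong Minimum Principle for $\elle$, itself a consequence of Proposition \ref{duedue} and the non-total-degeneracy and hypoellipticity of $\elle$, cf.\ \cite[Section 3]{lanconelli_uguzzoni_2010}) propagates the value $m$ backward in time along the set of points reachable from $z^*$, eventually reaching either $\partial_{t_0}\Omega$ or $\infty$, contradicting $\liminf u \geq 0$ there.

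Alternatively, and perhaps more cleanly, I would avoid re-proving propagation from scratch: since the whole setup is modeled on \cite{lanconelli_uguzzoni_2010}, the cited \cite[Proposition 3.10]{lanconelli_uguzzoni_2010} is proved there precisely by combining the $C^2$ minimum principle with the harmonic-space machinery, so the proof reduces to checking that each ingredient used there — the $C^2$ Minimum Principle (our Proposition \ref{duedue}), the fact that $z \mapsto \Gamma(z,\zeta_0)$ is an $\acca$-potential (established in the proof of Theorem \ref{dueuno}), and the Doob axioms (our Theorem \ref{dueuno}) — is available verbatim in the present, slightly more general, framework. I would phrase the proof as: ``By Theorem \ref{dueuno} and Proposition \ref{duedue}, the hypotheses of \cite[Proposition 3.10]{lanconelli_uguzzoni_2010} are satisfied, and the conclusion follows by the same argument.''

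\textbf{Main obstacle.} The delicate point is the backward-in-time propagation of the minimum: the harmonic-space super-mean-value inequality only gives equality on small balls, and turning this into a global statement requires the strong minimum principle for $\elle$ together with a careful bookkeeping of the time slices, so that the propagation of the value $m$ stays within $\Omega_T$ and is forced to exit through $\partial_T\Omega$ or to $\infty$, where hypotheses $(i)$--$(ii)$ apply. Making sure this exit always happens — i.e., that the ``negativity'' genuinely cannot be confined to the interior — is the crux, and it is exactly here that the non-total-degeneracy hypothesis $\sum_i a_{ii} > 0$ on $\elle$ is needed, to guarantee enough $\acca$-regular sets and enough propagation of harmonic/superharmonic values.
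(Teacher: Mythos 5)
Your proposal matches the paper: the paper offers no detailed argument for Proposition \ref{duequattro}, stating only that it follows ``from Theorem \ref{dueuno} and the theory of harmonic space'' and citing \cite[Proposition 3.10]{lanconelli_uguzzoni_2010}, which is precisely your preferred second route (check the Doob axioms via Theorem \ref{dueuno}, invoke the $C^2$ minimum principle of Proposition \ref{duedue}, and transfer the argument of \cite{lanconelli_uguzzoni_2010} verbatim). Your additional hands-on sketch via the super-mean-value inequality and backward propagation of the minimum goes beyond what the paper records, but since you ultimately phrase the proof as the citation, the approach is essentially the same.
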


\section{The Dirichlet problem for $\elle$}
\subsection{The Perron-Wiener solution}

Let $\Omega$ be a bounded open set  with $\overline \Omega \subseteq S$ and $\varphi \in C(\partial \Omega).$ We consider the Dirichlet problem
\begin{equation} \label{DP}\tag{DP}
\begin{cases}
 \elle   u= 0 \mbox{ in } \Omega    \\
  u|_{\partial \Omega} = \varphi \end{cases}
\end{equation}

Since the operator $\elle$ endows the strip $S$ with a structure of Doob $\beta$-harmonic space, by the Wiener resolutivity theorem 
 we always have the existence of a {\it generalized solution in the sense of Perron-Wiener} to the Dirichlet problem (DP) 
$$H_\varphi^\Omega:= \inf\{ u\in {\acca^*} (\Omega) \ | \ \liminf_{z\tende \zeta} u(z) \geq \varphi(\zeta)\quad \forall\ \zeta\in \partial\Omega\}.$$

$H_\varphi^\Omega$ is $C^\infty(\Omega)$ and satisfies $\elle u=0$ in $\Omega$. When the Dirichlet problem \eqref{DP} has a solution $u$ in the classical sense, it will turn out that $u=H_\varphi^\Omega.$

Viceversa, if 
$$\lim_{x\tende y} H_\varphi^\Omega(x)=\varphi(y)\quad\forall\ y\in\partial\Omega,$$ $u \in \acca(\Omega)\cap C(\overline\Omega)$ and solves the problem \eqref{DP} in classic sense. However,  in general, $H_\varphi^\Omega$ does not assume the datum $\varphi$ on $\Omega$. In the next sections we are going to give some conditions of boundary regularity  for $\elle$.

We will use a classical criterion from potential theory that characterizes the regularity of boundary points in term of the balayage on the complementary of $\Omega$.

\subsection{Balayage and a regularity criterion}
Given a compact set $K\subseteq S$, let $W_K$ and $V_K$ be, respectively, the {\it reduced function} and the {\it balayage} of $1$ on $K$:

$$W_K:= \inf \{ v\ | v \in {\acca^*} (S) ,\ v\geq 0 \inn S,\  v\geq 1 \inn K\}$$
and 
$$V_K(z)=\liminf_{\zeta\ttende z} W_K(\zeta),\qquad z\in S.$$

From general balayage theory we have that 
$V_K$ is equal to $1$ on the interior of $K$, vanishes at infinity, is a superharmonic function on S and harmonic on $S\backslash \partial K$ (see \cite[Proposition 4.1]{lanconelli_uguzzoni_2010}). Moreover we can characterize the regularity of the boundary point of an open set $\Omega$ by the following condition (see \cite[Proposition 4.6]{lanconelli_uguzzoni_2010}).

\begin{proposition}\label{regularpoint} 
Let $\Omega$ be a bounded open set with $\overline\Omega \subset S$ and let $z_0$ be a point of $\partial \Omega.$ Let 
$(B_\lambda)_{0<\lambda<1}$ be a basis of closed neighborhood of $x_0$ (in $\erre^N$) such that $B_\lambda\subseteq B_\mu$ if $0<\lambda<\mu\le1$.  For every $\lambda$,we set 

$$\Omega^c_\lambda(z_0):= (B_\lambda \times [t_0 - \lambda, t_0])\backslash \Omega.$$
Then, $z_0\in\partial\Omega$ is $\elle$-regular if and only if 
$$\lim_{r\tende 0} V_{\Omega^c_r (z_0)} (z_0) > 0.$$\end{proposition}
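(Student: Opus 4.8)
The plan is to identify $\elle$-regularity of $z_0$ with the non-thinness of $S\setminus\Omega$ at $z_0$, and then to translate non-thinness into a statement about the balayages $V_{\Omega^c_r(z_0)}$. Since $B_\lambda\subseteq B_\mu$ for $\lambda<\mu$, the sets $\Omega^c_r(z_0)$ decrease as $r\downarrow 0$, hence so do $W_{\Omega^c_r(z_0)}$ and $V_{\Omega^c_r(z_0)}$; thus $\ell:=\lim_{r\to 0}V_{\Omega^c_r(z_0)}(z_0)\in[0,1]$ exists and it suffices to show that $z_0$ is $\elle$-regular if and only if $\ell>0$. By Theorem~\ref{dueuno}, $\acca$ makes $S$ a Doob $\beta$-harmonic space, and since every $z\mapsto\Gamma(z,\zeta)$, $\zeta\in S$, is an $\elle$-potential, $S$ carries an abundant family of potentials; hence the general theory of \cite{CC}, \cite[Ch.~6]{BLU} applies, and from it I would use: (a) $z_0\in\partial\Omega$ is $\elle$-regular if and only if $S\setminus\Omega$ is not thin at $z_0$; and (b) for a relatively compact set $A$, $A$ is thin at $z_0$ iff $\inf\{V_{A\cap U}(z_0): U\ni z_0\text{ open}\}<1$, in which case this infimum equals $0$ (a potential $p$ with $p(z_0)<\infty$ and $p\to+\infty$ along $A$ near $z_0$ gives $V_{A\cap U}(z_0)\le\lambda^{-1}p(z_0)$ for $U$ small and every $\lambda$). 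The point $z_0\in S\setminus\Omega$, which lies in each $\Omega^c_r(z_0)$, is negligible throughout: by properties~(iii)--(iv) of $\Gamma$ the singleton $\{z_0\}$ is $\elle$-polar (test $W_{\{z_0\}}$ against the normalised potentials $\Gamma(\cdot,\zeta_n)/\Gamma(z_0,\zeta_n)$ with $\zeta_n\to z_0$, $\tau_n<t_0$).

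Granting (a)--(b), everything reduces to the identity
\[
\ell=\inf\{V_{(S\setminus\Omega)\cap U}(z_0):U\ni z_0\text{ open}\}.
\]
The inequality $\ell\le\inf_U$ is immediate, since the $B_r$ shrink to $\{x_0\}$, so $\Omega^c_r(z_0)\subseteq(S\setminus\Omega)\cap U$ once $r$ is small. For the reverse I would exploit the evolution structure: by property~(iv), $\Gamma((x,t),(\xi,\tau))=0$ whenever $\tau\ge t$, and combining this with the Riesz representation of $\elle$-potentials, $V_K=\int\Gamma(\cdot,\eta)\,d\mu_K(\eta)$ with $\mu_K$ carried by $K$, one gets that for every compact $G\subseteq\{t\ge t_0\}$ the potential $V_G$ vanishes identically on $\{t<t_0\}$, and $V_G(z_0)=0$ as well. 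Now fix $r$, set $U_r:=B_r^{\circ}\times\,]t_0-r,t_0+r[$, and split $(S\setminus\Omega)\cap U_r=E^-\cup E^+$ into the part $E^-$ with $t<t_0$ — whose closure lies in $\Omega^c_r(z_0)$ — and the part $E^+$ with $t\ge t_0$. The superharmonic function $V_{\overline{E^-}}+V_{\overline{E^+}}$ is $\ge 1$ $\elle$-quasi everywhere on $(S\setminus\Omega)\cap U_r$, hence dominates $V_{(S\setminus\Omega)\cap U_r}$; evaluating at $z_0$ and using $V_{\overline{E^+}}(z_0)=0$ and $\overline{E^-}\subseteq\Omega^c_r(z_0)$ gives $V_{(S\setminus\Omega)\cap U_r}(z_0)\le V_{\Omega^c_r(z_0)}(z_0)$, whence $\inf_U\le\ell$ upon letting $r\to 0$. (For $r$ small $\overline{U_r}\subseteq S$, as $t_0\in\,]T_1,T_2[$, so all balayages are well defined.)

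Putting the pieces together: $z_0$ is $\elle$-regular $\iff$ $S\setminus\Omega$ is not thin at $z_0$ $\iff$ $V_{(S\setminus\Omega)\cap U}(z_0)=1$ for every neighbourhood $U$ of $z_0$ $\iff$ $\ell=1$; while $S\setminus\Omega$ is thin at $z_0$ $\iff$ $\ell=0$. Hence $z_0$ is $\elle$-regular precisely when $\lim_{r\to 0}V_{\Omega^c_r(z_0)}(z_0)>0$, which is the assertion; moreover this limit can only be $0$ or $1$.

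I expect the genuine obstacle to be the identity $\ell=\inf_U V_{(S\setminus\Omega)\cap U}(z_0)$ of the second paragraph: the abstract thinness criterion is phrased with arbitrary space-time neighbourhoods of $z_0$, and replacing them by the one-sided cylinders $\Omega^c_r(z_0)$ is possible only because property~(iv) makes the portion of $S\setminus\Omega$ lying in the future of $z_0$ invisible from $z_0$. One must also handle the two minor nuisances already noted — that $z_0$ belongs to every $\Omega^c_r(z_0)$ (settled by the polarity of points) and that the sets involved must have closures inside $S$ (true for $r$ small). The remaining verifications are a routine unwinding of the harmonic-space dictionary, once it has been checked, exactly as in Theorem~\ref{dueuno} and \cite[Section~3]{lanconelli_uguzzoni_2010}, that the present setting is covered by \cite{CC}.
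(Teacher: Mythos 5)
Your overall strategy is the right one, and in fact it is essentially the proof that lies behind this statement: the paper itself gives no argument here but simply refers to \cite[Proposition 4.6]{lanconelli_uguzzoni_2010}, and the proof there runs exactly along your lines — the abstract balayage criterion ``$z_0$ regular $\Leftrightarrow$ $\varepsilon_{z_0}^{S\setminus\Omega}=\varepsilon_{z_0}$, i.e.\ $S\setminus\Omega$ not thin at $z_0$'', the polarity of points, and the observation that the part of $S\setminus\Omega$ lying in $\{t\ge t_0\}$ is invisible from $z_0$ because the representing measure of its balayage charges only points $\zeta=(\xi,\tau)$ with $\tau\ge t_0$, where $\Gamma(z_0,\zeta)=0$. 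So the route is not different from the paper's; it is a reconstruction of the cited proof.

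Two steps need repair or explicit importation. First, the passage ``$V_{\overline{E^-}}+V_{\overline{E^+}}\ge 1$ quasi everywhere on $(S\setminus\Omega)\cap U_r$, hence dominates $V_{(S\setminus\Omega)\cap U_r}$'' is not a valid principle in this (parabolic-type) setting: a superharmonic $v\ge 0$ with $v\ge 1$ on a set up to a \emph{semipolar} exceptional set need not dominate the balayage of that set. For the heat equation take $A=\overline{B}\times\{t_0\}$: $A$ is totally thin, so the exceptional set may be all of $A$, $v\equiv 0$ would qualify, yet $V_A\not\equiv 0$; only polar exceptional sets are harmless. The conclusion you want is nevertheless true, and the clean argument is monotonicity plus countable subadditivity of the balayage: since $(S\setminus\Omega)\cap U_r\subseteq\overline{E^-}\cup\overline{E^+}$, one has $V_{(S\setminus\Omega)\cap U_r}\le V_{\overline{E^-}\cup\overline{E^+}}\le V_{\overline{E^-}}+V_{\overline{E^+}}$, and then evaluate at $z_0$. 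Second, the identity $V_K=\int\Gamma(\cdot,\eta)\,d\mu_K(\eta)$ with $\mu_K$ carried by $K$, which is the only thing that makes $V_{\overline{E^+}}(z_0)=0$, does not follow from the facts recalled in this paper (properties (i)--(vi) of $\Gamma$ and the Doob axioms give neither the identification of $\Gamma$ with the Green function of the harmonic space nor the Riesz representation of $V_K$); it is precisely part of the balayage machinery developed in \cite{lanconelli_uguzzoni_2010}, and you should either quote it from there or prove it, since an elementary competitor construction would require lower bounds on $\Gamma$ that the axioms do not provide. With these two points fixed (and the thinness/base criterion quoted from \cite{CC}), your argument is complete and coincides in substance with the proof the paper relies on.
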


\section{Main Theorem}
Let $\Omega$ be a bounded open set with $\overline\Omega \subset S$ and let $z_0$ be a point of $\partial \Omega.$ We define $\Omega^c_\lambda(z_0)$ as in Proposition \ref{regularpoint}  and we denote 
 $$T_\lambda(z_0)=\{ x\in\erre^N\ : \ (x,t_0-\lambda)\in \Omega^c_\lambda(z_0)\}.$$
\vspace{0.5cm}

\begin{flushright}

\includegraphics[width=.6\textwidth]{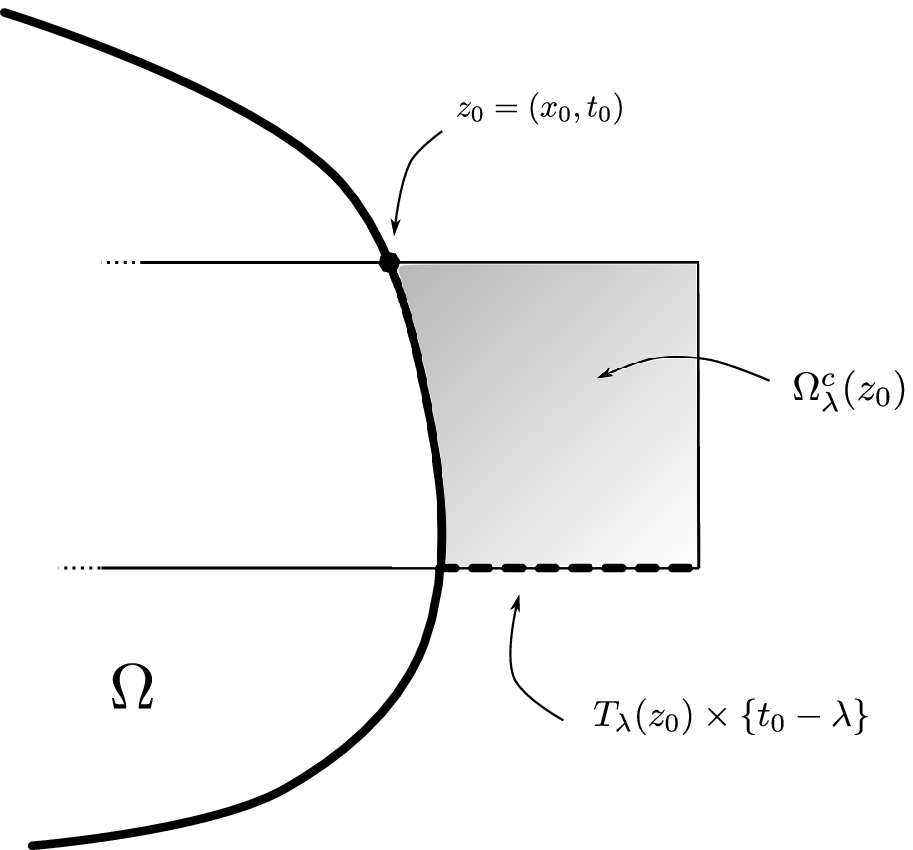}
\end{flushright}

Finally, we define 

$$\gamma_\lambda(z_0) = \int_{T_\lambda(z_0)} \Gamma (z_0; \xi, t_0 - \lambda)\ d\xi$$
and we state our main theorem.

\begin{theorem}\label{mainth}
The point $z_0\in\partial\Omega$ is $\elle$-regular if 
\begin{equation}\label{3.1} \limsup_{\lambda\searrow 0} \gamma_\lambda(z_0) > 0.\end{equation} \end{theorem}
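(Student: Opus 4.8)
The plan is to use the balayage criterion of Proposition \ref{regularpoint}: it suffices to show that $\lim_{r\to 0}V_{\Omega^c_r(z_0)}(z_0)>0$, or at least that $\liminf_{r\to 0}V_{\Omega^c_r(z_0)}(z_0)$ is bounded below by a positive constant, under the hypothesis $\limsup_{\lambda\searrow 0}\gamma_\lambda(z_0)>0$. The key idea is to estimate the reduced function $W_{\Omega^c_\lambda(z_0)}$ from below at $z_0$ by comparing it with a potential built from the fundamental solution. Specifically, for a compact set $K=\Omega^c_\lambda(z_0)\subseteq S$, I would consider the Green-type potential $G^K\mu$ of a suitable measure $\mu$ supported on the ``bottom slice'' $T_\lambda(z_0)\times\{t_0-\lambda\}$ of $K$, that is, functions of the form
$$z\longmapsto \int_{T_\lambda(z_0)}\Gamma(z;\xi,t_0-\lambda)\,f(\xi)\,d\xi,$$
which are $\acca$-potentials (by the same argument used in the proof of Theorem \ref{dueuno}, each $\Gamma(\cdot,\zeta)$ is a potential, and an integral average of potentials supported in $K$ is a nonnegative superharmonic function vanishing at infinity).

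First I would show that the function $p_\lambda(z):=\int_{T_\lambda(z_0)}\Gamma(z;\xi,t_0-\lambda)\,d\xi$ satisfies $p_\lambda\le C$ on all of $S$ (by property $(vi)$ of $\Gamma$, since $T_\lambda(z_0)\subseteq\erre^N$ and we only integrate over a slab with $t>t_0-\lambda$, while for $t\le t_0-\lambda$ we have $\Gamma(z;\xi,t_0-\lambda)=0$ by property $(iv)$). Hence $C^{-1}p_\lambda$ is a nonnegative potential bounded by $1$ on $S$. The crucial point is then a lower bound for $W_{\Omega^c_\lambda(z_0)}$ in terms of $p_\lambda$: I would argue that on the interior of $K=\Omega^c_\lambda(z_0)$ the competitor $C^{-1}p_\lambda$ is dominated by any admissible $v$ in the definition of $W_K$ — but that inequality goes the wrong way, so instead I would use a one-sided maximum principle (Proposition \ref{duequattro}) applied to $v-C^{-1}p_\lambda$ on the region below the slice. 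More precisely: near the bottom face, property $(ii)$ of $\Gamma$ gives that $p_\lambda$ approaches the indicator of $T_\lambda(z_0)$ as $z\to(x,t_0-\lambda)$ with $t>t_0-\lambda$, so on the part of $\partial K$ that matters $v\ge 1\ge C^{-1}p_\lambda$ (up to the $C$ normalization this needs $p_\lambda\le C$ there, which we have), while $p_\lambda$ vanishes on $\{t\le t_0-\lambda\}$; then Proposition \ref{duequattro} applied to the superharmonic function $v-C^{-1}p_\lambda$ on $\{t_0-\lambda<t\}\cap(\text{region avoiding }K)$ yields $v\ge C^{-1}p_\lambda$ everywhere relevant, hence $W_K\ge C^{-1}p_\lambda$, and in particular $W_{\Omega^c_\lambda(z_0)}(z_0)\ge C^{-1}p_\lambda(z_0)=C^{-1}\gamma_\lambda(z_0)$.

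Finally, taking lower limits and using lower semicontinuity, $V_{\Omega^c_\lambda(z_0)}(z_0)=\liminf_{\zeta\to z_0}W_{\Omega^c_\lambda(z_0)}(\zeta)\ge C^{-1}\liminf_{\zeta\to z_0}p_\lambda(\zeta)$; but $p_\lambda$ is continuous away from $t=t_0-\lambda$ and $z_0=(x_0,t_0)$ has $t_0>t_0-\lambda$, so this liminf equals $C^{-1}\gamma_\lambda(z_0)$. Choosing a sequence $\lambda_k\searrow 0$ along which $\gamma_{\lambda_k}(z_0)\to L>0$ (which exists by the $\limsup$ hypothesis), and noting that $r\mapsto V_{\Omega^c_r(z_0)}(z_0)$ is monotone along the nested family $(B_\lambda)$, I get $\lim_{r\to 0}V_{\Omega^c_r(z_0)}(z_0)\ge C^{-1}L>0$, so $z_0$ is $\elle$-regular by Proposition \ref{regularpoint}. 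The main obstacle I anticipate is making the comparison argument on $\partial K$ rigorous: controlling the boundary behavior of $p_\lambda$ on the lateral and top parts of $\partial\Omega^c_\lambda(z_0)$ (not just the bottom slice), and choosing the domain on which to apply Proposition \ref{duequattro} so that hypotheses $(i)$ and $(ii)$ there are genuinely met — in particular handling the possibility that $W_K$ or $p_\lambda$ is only lower semicontinuous, not continuous, near the irregular directions.
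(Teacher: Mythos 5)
Your plan is essentially the paper's own proof: reduce to the balayage criterion of Proposition \ref{regularpoint}, bound the reduced function $W_{\Omega^c_\lambda(z_0)}$ from below by the potential $p_\lambda(z)=\int_{T_\lambda(z_0)}\Gamma(z;\xi,t_0-\lambda)\,d\xi$ via the minimum principle of Proposition \ref{duequattro} on the strip above $t=t_0-\lambda$ (using boundedness of $p_\lambda$, its vanishing at infinity and its boundary behavior on the bottom slice), and then pass to the limit along a sequence realizing the $\limsup$. The only, harmless, deviations are your normalization by the constant $C$ of property (vi) (the paper takes $\int_{\erre^N}\Gamma\,d\xi=1$) and your suggestion to run the comparison on the region avoiding $K$, whereas the paper applies it on the whole strip $S_n=\erre^N\times{]t_0-\lambda_n,t_0[}$, where $w-v_n$ is superharmonic anyway.
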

\begin{proof}
We know that $z_0$ is $\elle$-regular if and only if 
\begin{equation}\label{3.2} \lim_{\lambda \tende 0} V_{\Omega^c_\lambda (z_0)} (z_0) > 0\end{equation}
(see Section \ref{potentialelle}, Proposition \ref{regularpoint}). Therefore we have to prove condition \eqref{3.2}. To this end, using 
\eqref{3.1}, we choose a sequence $\lambda_n\searrow 0, \lambda_n>0,$ and a constant $a>0$ such that 
$$\gamma_{\lambda_n}(z_0)\geq a\qquad \forall\ n\in\enne.$$
Let us now consider the strip 
$$S_n=\erre^N\times ]t_0-\lambda_n, t_0[,\quad n\in\enne,$$ 
and define 
$$v_n(z) = \int_{T_{\lambda_n}(z_0)} \Gamma (z; \xi, t_0 - \lambda_n)\ d\xi.$$
We will prove the inequalities 
\begin{equation}\label{tretre} V_{\Omega^c_{\lambda_n} (z_0)} \geq v_n \inn S_n,\ \forall n\in \enne.\end{equation}
As a consequence, we will have 
$$\lim_{n\tende\infty}  V_{\Omega^c_{\lambda_n} (z_0)} (z_0)\geq \limsup_{n\tende \infty} v_n(z_0)=  \limsup_{n\tende \infty} \gamma_{\lambda_n}(z_0)\geq a >0,$$
which implies \eqref{3.2}.

To prove \eqref{tretre} we first remark that $v_n$ is $\acca$-harmonic in $S_n$ and that 
$$\lim_{\substack{{z\tende \infty}\\{z\in S_n}}} v_n(z)=0.$$ 
Moreover $$v_n(z)\le \int_{\erre^N} \Gamma (z; \xi,t_0-\lambda_n)\ d\xi = 1,$$ for every $z\in S_n$, and 
$$\lim_{S_n\ni z\tende (y_0,t_0-\lambda_n)} v_n(z)=0\qquad\forall y_0 \notin T_{\lambda_{n}}(z_0).$$
Now, let $w\in \overline{\acca}(S_n)$, $w\geq 0$ in $S_n$ and $w\geq 1$ in $\Omega_{\lambda_n}^c(z_0).$

Then,
$w-v_n\in \overline{\acca}(S_n)$ and: \begin{equation*}
\liminf_{S_n\ni z\tende (y,t_0-\lambda_n)} (w(z)- v_n(z)) \geq w((y,t_0-\lambda_n)-1\geq 0,\end{equation*}
for every $y \in T_{\lambda_n}(z_0),$ and
$$\liminf_{S_n\ni z\tende (y,t_0-\lambda_n)} (w(z)- v_n(z)) \geq  
\liminf_{S_n\ni z\tende (y,t_0-\lambda_n)} w(z) \geq 0,$$
for every $y \notin T_{\lambda_n}(z_0).$

Finally,
$$\liminf_{S_n\ni z\tende (y,t_0-\lambda_n)} (w(z)- v_n(z))\geq 0,\qquad \forall\ y \in\erre^N.$$

Then, by the Minimum Principle for $\acca$-superharmonic functions (see Proposition \eqref{duequattro}), we get 
$$w-v_n\geq 0 \inn S_n.$$
Taking the infimum with respect to $w$ in this inequality we obtain \eqref{tretre}, completing the proof. 

\end{proof}

\section{Applications: cone-type criteria for evolution equations}
In this section we prove cone-type criteria  for two classes of evolution equations. 
\subsection{Invariant and homogeneous operators on a group in $\erre^{N+1}$}
We consider operators left translation invariant and homogeneous of degree two with respect an homogeneous group 
\begin{equation} \mathbb{G}=(\erre^{N+1}, \circ, \delta_r).\end{equation}
Notions and results about homogeneous groups can be found in the first chapter of the monograph \cite{BLU}. 
For the reader convenience, we shortly recall the definition of {\it homogeneous group in $\erre^{N+1}$} adapted to our setting. 
The triple $\mathbb{G}$ in \eqref{3.1} is called homogeneous Lie group if  $(\erre^{N+1},\circ)$ is a
Lie group and if $(\delta_r)_{r >0}$ is a group of {\it homomorphisms} on $(\erre^{N+1},\circ)$ of the following type
\begin{eqnarray*}
\delta_r:\erre^{N+1}\ttende\erre^{N+1},\quad \delta_r (x_1,\ldots, x_N,t)= (r^{\sigma_1} x_1,\ldots,
r^{\sigma_N} x_N, r^2 t ),\end{eqnarray*}
where $\sigma_1, \ldots, \sigma_p$ are positive integers such that $1 \le \sigma_1 \le ... \le  \sigma_N$.

We set $$D_r = \delta_r|_{\erre^N}.$$  $(D_r)_{r>0}$ is group of dilation in $\erre^N$. The natural number 
$$Q= \sigma_1 + ... +  \sigma_N$$ is the {\it homogeneous dimension of $\erre^N$ with respect to $(D_r)_{r>0}$}
while $$Q+2$$ is the {\it homogeneous dimension of $\erre^{N+1}$ with respect to $(\delta_r)_{r>0}$}.

We suppose $\elle$ to be left translation invariant on $(\erre^{N+1}, \circ)$ and homogeneous of degree two with respect the dilations 
$(\delta_r)_{r>0}$. Denoting $k$ the fundamental solution of $\elle$ with pole at the origin $(0,0)$, the last hypothesis we need  is that the fundamental solution of $\elle$ satisfies the following properties:

\begin{itemize}
\item[(a)] $\Gamma(z,\zeta) =k(\zeta^{-1}\circ z)$; 
\item[(b)]  $k(\delta_r(z))= r^{-Q} k(z).$
\end{itemize} 
Operators belonging to this class are, for example, the heat operators on stratifies Lie groups, the ultraparabolic operators introduced and studied in  \cite{kogoj_lanconelli_2004, link} and the {\it homogeneous} prototypes of Kolmogorov-Fokker-Planck operators studied in \cite{lanconelli_polidoro_1994}.
 
We name {\it $\delta_r$-cone with vertex in $(0,0)$} every open set of the following kind:
\begin{equation*} \hat C:=\{ \delta_r(\xi, -T)\ | \ \xi\in B,\ 0<r <1\} , = \{ (D_r(\xi), -r^2 T) \ |\ \xi\in B,\ 0<r <1 \},
\end{equation*}
where $T>0$ and $B$ is a bounded open set of $\erre^N$, $\mathrm{int} B\neq\emptyset.$ 

We name  {\it $\delta_r$-cone with vertex in $z_0$}  the set $$z_0\circ \hat C,$$ where $\hat C$ is a  $\delta_r$-cone with vertex in $0$.

\begin{flushright}
\includegraphics[width=.6\textwidth]{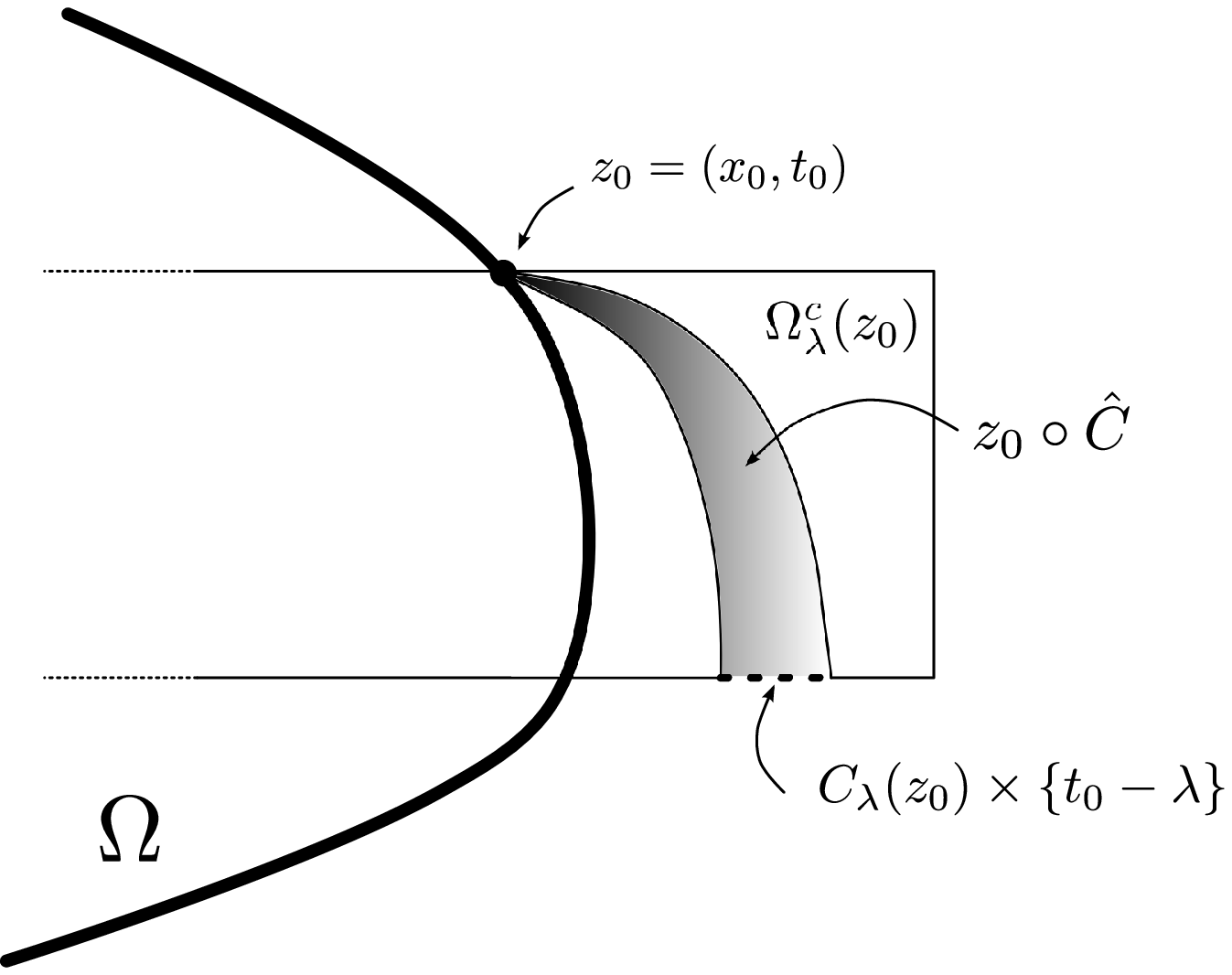}

\end{flushright}

Thanks to our main theorem (Theorem \ref{mainth}), we derive now the following cone-type criterion that extends the {\it parabolic cone (or tusk) condition} by Effros and Kazdan.
\begin{theorem} \label{criterio1} Let $\elle$ be an invariant evolution operator on $\mathbb{G}=(\erre^{N+1}, \circ, \delta_r).$ Let $\Omega$ be a bounded open set of $\erre^{N+1}$ and let be $z_0 \in \partial\Omega.$ If there exists a $\delta_r$-cone with vertex in $z_0$ contained in $\erre^{N+1}\backslash \Omega$, then $z_0$ is $\elle$-regular for $\Omega.$ 
\end{theorem}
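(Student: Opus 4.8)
The strategy is to reduce Theorem~\ref{criterio1} to an application of the Main Theorem (Theorem~\ref{mainth}) by showing that the presence of a $\delta_r$-cone $z_0\circ\hat C$ in the complement of $\Omega$ forces $\limsup_{\lambda\searrow 0}\gamma_\lambda(z_0)>0$. By left-translation invariance of $\elle$ (and hence of the notion of $\elle$-regularity and of the fundamental solution, via property (a)), it suffices to treat the case $z_0=(0,0)$, so that the cone is $\hat C=\{\delta_r(\xi,-T)\mid \xi\in B,\ 0<r<1\}\subseteq \erre^{N+1}\setminus\Omega$. The first step is to choose the neighborhood basis $(B_\lambda)$ in the definition of $\Omega^c_\lambda(z_0)$ adapted to the dilations: take $B_\lambda = D_{\sqrt{\lambda/T}}(\overline B)$ (rescaled so that the time-slice at level $t_0-\lambda = -\lambda$ corresponds to the cross-section of $\hat C$ at parameter $r=\sqrt{\lambda/T}$). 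With this choice, for $\lambda$ small enough that $\sqrt{\lambda/T}<1$, the slice $\{x : (x,-\lambda)\in\hat C\}$ is exactly $D_{\sqrt{\lambda/T}}(B)$, and since $\hat C\subseteq\erre^{N+1}\setminus\Omega$ this whole slice lies in $\Omega^c_\lambda(z_0)$; hence $T_\lambda(z_0)\supseteq D_{\sqrt{\lambda/T}}(B)$.

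The second step is the computation of $\gamma_\lambda(z_0)$ using homogeneity. We have
$$\gamma_\lambda(0,0)=\int_{T_\lambda(z_0)}\Gamma(0,0;\xi,-\lambda)\,d\xi\ \geq\ \int_{D_{\sqrt{\lambda/T}}(B)}k\big((\xi,-\lambda)^{-1}\big)\,d\xi,$$
using $\Gamma(z,\zeta)=k(\zeta^{-1}\circ z)$ with $z=(0,0)$. Writing $r=\sqrt{\lambda/T}$ and substituting $\xi = D_r(\eta)$, $\eta\in B$, the Jacobian is $r^{Q}$, and one checks from the group and dilation structure that $(\xi,-\lambda)^{-1} = (D_r(\eta),-r^2T)^{-1} = \delta_r\big((\eta,-T)^{-1}\big)$ (the inverse map commutes appropriately with $\delta_r$ since $\delta_r$ is a group homomorphism). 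Then property (b) gives $k\big(\delta_r((\eta,-T)^{-1})\big) = r^{-Q}\,k\big((\eta,-T)^{-1}\big)$, and the $r^{Q}$ from the Jacobian cancels the $r^{-Q}$, yielding
$$\gamma_\lambda(0,0)\ \geq\ \int_{B} k\big((\eta,-T)^{-1}\big)\,d\eta\ =:\ c_0,$$
a constant independent of $\lambda$. Finally $c_0>0$: by property (iv) of $\Gamma$, $k((\eta,-T)^{-1})=\Gamma(0,0;\eta,-T)>0$ for every $\eta$ (the time component $-T<0$), and $B$ has nonempty interior, so the integral of a positive continuous function over a set of positive measure is strictly positive. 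Therefore $\limsup_{\lambda\searrow 0}\gamma_\lambda(z_0)\geq c_0>0$, and Theorem~\ref{mainth} gives that $z_0$ is $\elle$-regular.

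The main obstacle I expect is the bookkeeping in the second step: verifying precisely that $(D_r(\eta),-r^2T)^{-1}=\delta_r((\eta,-T)^{-1})$ and that the change of variables $\xi=D_r\eta$ on $\erre^N$ indeed produces the factor $r^Q$ matching the homogeneous dimension, so that the scaling genuinely cancels. One must be a little careful that the neighborhood basis $(B_\lambda)$ chosen this way is admissible in the sense of Proposition~\ref{regularpoint} (nested closed neighborhoods of $x_0=0$ shrinking to $\{0\}$), which it is since $D_r$ is a dilation with $D_r\to\{0\}$ as $r\to 0$ and $\overline B$ is a fixed compact neighborhood of the origin — here one should assume, as is implicit, that $B$ is a neighborhood of $0$ (otherwise one intersects with a genuine neighborhood basis, only shrinking $T_\lambda$, which still works as long as the intersection retains positive measure, e.g. by translating the cone slightly or by noting the criterion only needs $\limsup>0$). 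Everything else is a direct substitution into the already-established Main Theorem.
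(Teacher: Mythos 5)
Your overall route is the same as the paper's: reduce to $z_0=(0,0)$ by left invariance, show that the integral of $\Gamma(0,0;\cdot,-\lambda)$ over the cone's time slice $C_\lambda(0)=D_{\sqrt{\lambda/T}}(B)$ is bounded below by a positive constant independent of $\lambda$, and invoke Theorem \ref{mainth}. Your scaling computation is correct and in fact slightly cleaner than the paper's: you perform the exact change of variables $\xi=D_r(\eta)$, using that $\delta_r$ is a group automorphism so $(D_r(\eta),-r^2T)^{-1}=\delta_r\bigl((\eta,-T)^{-1}\bigr)$, and the Jacobian $r^{Q}$ cancels the factor $r^{-Q}$ coming from property (b), giving $\int_{C_\lambda(0)}\Gamma(0,0;\xi,-\lambda)\,d\xi=\int_{B}k\bigl((\eta,-T)^{-1}\bigr)\,d\eta>0$; the paper instead bounds $\Gamma$ from below pointwise by $r^{-Q}\min_{\overline B}k\bigl((\cdot,-T)^{-1}\bigr)$ and multiplies by $|C_\lambda(0)|=r^{Q}|B|$, which is the same estimate in min-form.

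The one real defect is your handling of the neighborhood basis $(B_\lambda)$ entering the definition of $\Omega^c_\lambda(z_0)$, hence of $T_\lambda(z_0)$. Your choice $B_\lambda=D_{\sqrt{\lambda/T}}(\overline B)$ is a basis of closed neighborhoods of $0$ only if $0$ is interior to $\overline B$, and assuming $0\in B$ is not harmless: the point of the Effros--Kazdan tusk-type condition is precisely that the spatial slices of the cone need not contain the spatial projection of the vertex, so this extra hypothesis would genuinely weaken the theorem. Your fallbacks do not repair this: intersecting $D_{\sqrt{\lambda/T}}(\overline B)$ with an arbitrary genuine basis $(B'_\lambda)$ can produce empty sets or sets of measure zero for all small $\lambda$ (e.g.\ if $B'_\lambda$ shrinks faster than the cone slices, or if $0\notin\overline B$), and ``translating the cone'' is not available, since the cone must have vertex exactly at $z_0$ and stay in $\erre^{N+1}\setminus\Omega$. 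The correct and easy fix --- which is also what the paper leaves implicit with its remark that $C_\lambda(0)$ is eventually contained in any neighborhood of $0$ --- is to fix a homogeneous norm $\|\cdot\|$ on $\erre^N$ with $\|D_s(x)\|=s\|x\|$, set $M=\max_{\xi\in\overline B}\|\xi\|$ and take $B_\lambda=\{x\in\erre^N:\|x\|\le M\sqrt{\lambda/T}\}$: these are nested closed neighborhoods of $0$ forming a basis, they contain $C_\lambda(0)=D_{\sqrt{\lambda/T}}(B)$, hence $T_\lambda(0)\supseteq C_\lambda(0)$ and your estimate $\gamma_\lambda(0,0)\ge c_0>0$ holds for all $\lambda\in\,]0,T[$, so Theorem \ref{mainth} applies. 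With that substitution your proof is complete and coincides in substance with the paper's.
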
 

\begin{proof} 
As the operator $\elle$ is left translation invariant on $\mathbb{G}$, it is sufficient to prove the theorem in the case $z_0=(0,0).$ So, let $\hat C$ a $\delta_r$-cone with vertex in $(0,0)$ such that $\hat C \subseteq \erre^{N+1}\backslash \Omega.$  
Let $\lambda$ be in  $]0,T[,$ we set 
$$C_\lambda (0)= \{ x\in \erre^N \ | \ (x,-\lambda) \in \hat{C}  \}.$$
For any $W$ neighborhood of $0$ (in $\erre^N$), there exists $\lambda_0\in ]0,T[$  such that  $C_\lambda (0)\subseteq W$ for every $\lambda\in  ]0,\lambda_0[.$ We observe that $$C_\lambda (0)= D_r (B) \mbox{\quad  with \quad }r= \sqrt{\frac{\lambda} {T}}.$$ 
In particular 
\begin{equation} \label{lllambda}|C_\lambda (0)| = |D_r (B)| = r^Q |B|.\end{equation} 
Moreover, if $z\in C_\lambda(0)\times \{-\lambda\}$ and $r= \sqrt{\dfrac{\lambda} {T}}$, then

\begin{equation*}\begin{split}\Gamma(0,z)&=k(z^{-1})= k((\delta_r(\xi,-T))^{-1})=r^{-Q} k((\xi,-T)^{-1})\\
&\le r^{-Q} \min_{\xi\in\overline{B}} k((\xi,-T)^{-1}) = r^{-Q}  a_0. \end{split}
\end{equation*}
Using \eqref{lllambda}, we get,
\begin{equation*} \Gamma(0,z) \geq  \frac{a_0}{r^{Q}}= \frac{a_0 |B|}{|C_\lambda(0)|} = \frac{a}{|C_\lambda (0)|}. \end{equation*}
So, 
\begin{equation*} \int_{C_\lambda(0)} \Gamma(0; \xi,-\lambda)\ d\xi  \geq  \frac{a}{|C_\lambda (0)|}  |C_\lambda (0)| = a\qquad \forall \lambda \in ] 0,\lambda_0[.  \end{equation*}
Now, an application of Theorem \eqref{mainth} proves that $0$ is $\elle$-regular.

\end{proof}

\subsection{Kolmogorov-Fokker-Planck-type operators}
We formulate now a cone-type criterion for the class of operators introduced in \cite{lanconelli_polidoro_1994}  and subsequently studied by many authors as a basic model for general Kolmogorov-Fokker-Planck operators.
More precisely, we consider the operators 
in $\mathbb{R}^{N+1 }$
\begin{eqnarray}\label{KFP}
\elle = \mathrm{div}\left(  A\nabla\right)  +\left\langle B x,\nabla\right\rangle - \partial_t
\end{eqnarray}
where $A = (a_{i,j})_{i,j = 1, \dots , N}$ and $B= (b_{i,j})_{i,j = 1, \dots , N}$ are constant $N\times N$ matrices, $A$ is symmetric and nonnegative definite. in \cite{lanconelli_polidoro_1994}, it is proved that if we define the matrix
\begin{equation}\label{C(t)}
C(t)=\int_{0}^{t} E(s)AE^T(s)\,ds\mbox{, where }E(s) =\exp\left(-sB\right),  %
\end{equation}
 the operator $\mathcal{H}$ is hypoelliptic if and only if  $C(t)>0$ for every $t>0$. Furthermore, under these conditions, for some basis
of $\mathbb{R}^{N}$, the matrices $A,B$ take the following  form:%
\begin{equation*}
A=%
\begin{bmatrix}
A_{0} & 0\\
0 & 0
\end{bmatrix} \label{A}%
\end{equation*}
for some $p_{0}\times p_{0}$ symmetric and positive definite
constant matrix $A_{0}$ ($p_{0}\leq N$), and
\begin{equation*}
B=%
\begin{bmatrix}
\ast & \ast & \ldots & \ast& \ast\\
B_{1}  & \ast & \ldots & \ast & \ast \\
0 & B_{2}& \ldots & \ast &\ast \\
\vdots & \vdots & \ddots & \vdots & \vdots\\
0 & 0 & \ldots &  B_{r} & \ast
\end{bmatrix},
\label{B}%
\end{equation*}
where $B_{j}$ is a $p_{j-1}\times p_{j}$ block with rank $p_{j}$ ($j=1,2,...,r$), $p_{0}\geq p_{1}\geq...\geq p_{r}\geq1$ and $p_{0}+p_{1}+...+p_{r}=N$.

The operator $\elle$ is left-invariant with respect to the Lie group $\mathbb {K}$ whose underlying manifold is $\mathbb{R}^{N+1}$, endowed with the composition law
$$
\left(  x,t\right)  \circ\left(  \xi,\lambda\right)  =\left(  \xi+E(\lambda)  x,t+\lambda\right).
$$ 
Under the assumptions stated above, the operator $\elle$ in (\ref{KFP}) has a fundamental solution
\[
\Gamma\left(  z,\zeta\right)  =k\left(  \zeta^{-1}\circ z\right)  \text{
for }z,\zeta\in\mathbb{R}^{N+1},
\]
with%
\begin{equation}
k\left(x,t\right)  =\left\{
\begin{tabular}
[c]{ll}%
$0$ & $\text{for }t\leq0$\\
$\frac{\left(  4\pi\right)  ^{-N/2}}{\sqrt{\det C\left(  t\right)  }}%
\exp\left(  -\frac{1}{4}\left\langle C^{-1}\left(  t\right)  x,x\right\rangle
\right)  $ & $\text{for }t>0$%
\end{tabular}
\ \right. \nonumber
\end{equation}
where $C\left(  t\right)  $ is as in (\ref{C(t)}). Recall that $C\left(  t\right)  $ is positive definite for all
$t>0$; hence $k \in C^{\infty}\left(  \mathbb{R}^{N+1}\backslash\left\{
0\right\}  \right)$. Furthermore, $\Gamma$ satisfies condition $(i)-(vi)$ in Section 1. 

Let us now consider, for every $r>0$, the dilations

\begin{eqnarray*}\delta_r:\erre^{N+1}\ttende\erre^{N+1},\quad \delta_r(x,t)&=&
\delta_r(x^{(p)},x^{(p_1)},\ldots,x^{(p_k)},t)\\&=&(r
x^{(p_0)},r^3
x^{(p_1)},\ldots,p^{2k+1}x^{(p_k)},r^2t)\\
x^{(p_j)}\in\erre^{p_i},\quad j=0,\ldots,k,\quad r>0.\end{eqnarray*}
We wish to explicitly recall that $\delta_r$ is an authomorphism of $\mathbb{K}$ if and only if the all the blocks $\ast$
in $B$ are identically zero.

As in the previous subsection we call {\it $\delta_r$-cone with vertex in $(0,0)$} any open set of the  kind:
\begin{equation*} \hat C:=\{ \delta_r(\xi, -T)\ | \ \xi\in B,\ 0<r <1\} , 
\end{equation*}
where $T>0$ and $B$ is a bounded open set of $\erre^N.$

We name  {\it $\delta_r$-cone with vertex in $z_0$}  every set $$z_0\circ \hat C,$$ where $\hat C$ is a  $\delta_r$-cone with vertex in $0$.

Although, in general, the operator $\elle$, is not $\delta_r$-homogeneous, nevertheless the following Proposition holds.
\begin{theorem}  Let $\elle$ be a Kolmogorov-Fokker-Planck-type operator as in \eqref{KFP}. 

Let $\Omega$ be a bounded open set of $\erre^{N+1}$ and let be $z_0\in\partial\Omega.$ If there exists a $\delta_r$-cone with vertex in $z_0$ contained in $\erre^{N+1}\backslash \Omega$, then $z_0$ is $\elle$-regular for $\Omega.$ 
\end{theorem}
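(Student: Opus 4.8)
The plan is to mimic the proof of Theorem~\ref{criterio1}, reducing to the vertex $z_0=(0,0)$ by left-invariance and then verifying the hypothesis \eqref{3.1} of the main theorem (Theorem~\ref{mainth}). Since the composition law of $\mathbb{K}$ is $(x,t)\circ(\xi,\lambda)=(\xi+E(\lambda)x,\,t+\lambda)$, the operator $\elle$ is left translation invariant, so it suffices to treat $z_0=(0,0)$. Assume $\hat C=\{\delta_r(\xi,-T)\ |\ \xi\in B,\ 0<r<1\}\subseteq\erre^{N+1}\backslash\Omega$ for some $T>0$ and bounded open $B\subseteq\erre^N$. For $\lambda\in\,]0,T[$ set $C_\lambda(0)=\{x\in\erre^N\ |\ (x,-\lambda)\in\hat C\}$; as in Theorem~\ref{criterio1}, $C_\lambda(0)\subseteq T_\lambda(0)$ once $\lambda$ is small, so it is enough to bound $\int_{C_\lambda(0)}\Gamma(0;\xi,-\lambda)\,d\xi$ from below by a positive constant along a sequence $\lambda_n\searrow 0$.

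First I would identify the slice $C_\lambda(0)$ explicitly. Writing $\delta_r(\xi,-T)=(D_r\xi,-r^2T)$ with $D_r$ the spatial part of $\delta_r$, the time-coordinate equation $-r^2T=-\lambda$ forces $r=\sqrt{\lambda/T}$, whence $C_\lambda(0)=D_r(B)$ and $|C_\lambda(0)|=|\det D_r|\,|B|=r^{2+4+\dots}$-type power of $r$ times $|B|$; more precisely $|C_\lambda(0)|=r^{Q'}|B|$ where $Q'=p_0\cdot 1+p_1\cdot 3+\dots+p_k(2k+1)$ is the homogeneous dimension attached to $(D_r)$. Then, for $x\in C_\lambda(0)$, i.e.\ $x=D_r\xi$ with $\xi\in\overline B$, I would compute $\Gamma(0;x,-\lambda)=k((x,-\lambda)^{-1}\circ(0,0))$. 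Here the key difference from the homogeneous case is that $k$ is \emph{not} exactly $\delta_r$-homogeneous; instead one must use the explicit Gaussian form $k(y,s)=\frac{(4\pi)^{-N/2}}{\sqrt{\det C(s)}}\exp(-\tfrac14\langle C^{-1}(s)y,y\rangle)$ for $s>0$. So I would estimate $\det C(\lambda)$ and the quadratic form $\langle C^{-1}(\lambda)D_r\xi,D_r\xi\rangle$ as $\lambda\to 0$: by the scaling properties of $C(t)$ established in \cite{lanconelli_polidoro_1994}, one has $C(\lambda)=D_r\,C(T)\,D_r^T$ up to lower-order corrections coming from the $\ast$-blocks of $B$, equivalently $C(\lambda)\asymp D_r C(T) D_r^T$ uniformly for small $\lambda$. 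This yields $\det C(\lambda)\asymp (\det D_r)^2\det C(T)=r^{2Q'}\det C(T)$ and $\langle C^{-1}(\lambda)D_r\xi,D_r\xi\rangle\asymp\langle C^{-1}(T)\xi,\xi\rangle$, both with constants independent of $\lambda\in\,]0,\lambda_0[$.

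Combining these, there is a constant $a_1>0$, depending only on $T$, $B$, $A$, $B$ (the matrix), with
\begin{equation*}
\Gamma(0;x,-\lambda)\;\geq\;\frac{a_1}{r^{Q'}}\;=\;\frac{a_1\,|B|}{|C_\lambda(0)|}\qquad\text{for all }x\in C_\lambda(0),\ \lambda\in\,]0,\lambda_0[,
\end{equation*}
exactly parallelling the chain of inequalities in Theorem~\ref{criterio1}. Integrating over $C_\lambda(0)$ gives $\int_{C_\lambda(0)}\Gamma(0;\xi,-\lambda)\,d\xi\geq a_1|B|=:a>0$ for every small $\lambda$, hence $\gamma_\lambda(0)\geq\int_{C_\lambda(0)}\Gamma(0;\xi,-\lambda)\,d\xi\geq a$ (using $C_\lambda(0)\subseteq T_\lambda(0)$), so $\limsup_{\lambda\searrow 0}\gamma_\lambda(0)\geq a>0$ and Theorem~\ref{mainth} applies, giving $\elle$-regularity of $0$ and therefore of $z_0$.

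The main obstacle is the loss of exact $\delta_r$-homogeneity: in Theorem~\ref{criterio1} the identity $k(\delta_r z)=r^{-Q}k(z)$ did all the work in one line, whereas here I must replace it by two-sided asymptotic bounds on $C(\lambda)$ under the spatial dilations $D_r$ as $\lambda\to0$. The point is that the non-homogeneity is carried entirely by the $\ast$-blocks of $B$, which contribute only lower-order terms in the small-time expansion of $E(s)=\exp(-sB)$ and hence of $C(t)=\int_0^t E(s)AE^T(s)\,ds$; the principal part of $C(\lambda)$ scales exactly like $D_rC(T)D_r^T$, so the leading behavior of both $\det C(\lambda)$ and $C^{-1}(\lambda)$ is homogeneous and the corrections are harmless uniformly on $\overline B$. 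Making this uniformity precise — i.e.\ showing the constants in $\asymp$ do not degenerate as $\lambda\searrow 0$ — is the only technically delicate point; everything else is a verbatim repetition of the argument for Theorem~\ref{criterio1}.
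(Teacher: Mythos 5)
Your argument is correct in outline and reaches the conclusion the same way at the top level (reduce to $z_0=(0,0)$ by left-invariance, bound $\int_{C_\lambda(0)}\Gamma(0;\xi,-\lambda)\,d\xi$ from below by a positive constant for small $\lambda$, apply Theorem \ref{mainth}), but at the key step you diverge from the paper. The paper does not touch the explicit Gaussian at all: it introduces the \emph{principal part} $\elle_0$, obtained by replacing the $\ast$-blocks of $B$ by zero, observes that $\elle_0$ is $\delta_r$-homogeneous and left-invariant on the corresponding group $\mathbb{K}_0$, applies the computation of Theorem \ref{criterio1} to its fundamental solution $\Gamma_0$ to get $\Gamma_0(0,z)\geq a/|C_\lambda(0)|$ on the cone slices, and then transfers this to $\Gamma$ by quoting Theorem 3.1 of \cite{lanconelli_polidoro_1994}, which gives $\Gamma(0,z)\geq\alpha\,\Gamma_0(0,z)$ on $\hat C$. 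You instead estimate $k$ directly via two-sided small-time bounds $C(\lambda)\asymp C_0(\lambda)$ (equivalently $\asymp D_rC(T)D_r^T$, since $C(T)$ and $C_0(T)$ are comparable fixed positive matrices) — that is, you are essentially re-deriving the content of the comparison theorem rather than citing it, and this is exactly where your ``only technically delicate point'' lives: you must prove the uniformity of the matrix asymptotics as $\lambda\searrow0$, and note also that the group inverse produces $k(-E(\lambda)x,\lambda)$, so the quadratic form you need to control is $\langle C^{-1}(\lambda)E(\lambda)D_r\xi,E(\lambda)D_r\xi\rangle$, not quite $\langle C^{-1}(\lambda)D_r\xi,D_r\xi\rangle$; this extra $E(\lambda)$ factor is harmless ($E(\lambda)\to I$) but is glossed over in your sketch. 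In short: both routes rest on the small-time analysis of \cite{lanconelli_polidoro_1994}; the paper's route is shorter because the heavy lifting is a single citation of the fundamental-solution comparison, while yours is more self-contained in spirit but leaves the uniform estimates on $C(\lambda)$, $\det C(\lambda)$ and the $E(\lambda)$ correction as stated-but-unproved claims that would need to be carried out (or replaced by the citation) to make the proof complete.
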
 

\begin{proof} 

As the operator $\elle$ is left translation invariant on $\mathbb{G}$, as in the previous proposition, we prove the theorem in the case $z_0=(0,0)$.
Let  $\hat C$ be a $\delta_r$- cone with vertex in $(0,0)$ such that $\hat C \subseteq \erre^N\backslash \Omega.$  
We denote by $\elle_0$ the {\it principal part} of $\elle$, i.e. the operator 
\begin{eqnarray*}
\elle_0 = \mathrm{div}\left(  A\nabla\right)  +\left\langle B_0 x,\nabla\right\rangle - \partial_t
\end{eqnarray*}
where $B_0$ is the matrix obtained replacing in $B$  all the $\ast$  blocks  by zero matrices. 
$\elle_0$ is hypoelliptic as the matrix
\begin{equation*}
C_0(t)=\int_{0}^{t} E_0(s)AE_0^T(s)\,ds\mbox{,\qquad  }E_0(s) =\exp\left(-sB_0\right),  %
\end{equation*}
is strictly positive  for every $t>0$.  Furthermore,  $\elle$ is left-invariant and homogeneous of degree two 
with respect to the Lie group $(\mathbb {K}_0, \tilde\circ, \delta_r)$ where the composition law is 
$$\left(  x,t\right)  \circ\left(  \xi,\lambda\right)  =\left(  \xi+E_0(\lambda)  x,t+\lambda\right).$$
Then, $\elle_0$ belongs to the class of the operators considered in the previous subsection,  and by Theorem \ref{criterio1}
$$\Gamma_0(0,z) \geq \frac{a}{|C_\lambda (0)|} \qquad \qquad \forall z \in C_\lambda(0), \forall \lambda \in ]0,T[,$$
where $a$ is a suitable real positive constant and $\Gamma_0$ denotes the fundamental solution of $\elle_0$. 
By Theorem 3.1 in \cite{lanconelli_polidoro_1994} there exists a constant $\alpha >0$ such that
$$\Gamma (0,z) \geq \alpha \Gamma_0(0,z) \qquad\qquad\forall z\in \hat C,$$ and then 
$$\Gamma (0,z) \geq  \frac{a \alpha}{|C_\lambda (0)|}.$$
This, thanks to Theorem \eqref{mainth}, proves that $0$ is $\elle$-regular. 
 \end{proof}

\section*{Acknowledgments}

The author has been partially supported by the Gruppo Nazionale per l'Analisi Matematica, la Probabilit\`a e le
loro Applicazioni (GNAMPA) of the Istituto Nazionale di Alta Matematica (INdAM).


\begin{thebibliography}{BBLU10}

\bibitem[Bau66]{bauer}
H.~Bauer.
\newblock {\em Harmonische {R}\"aume und ihre {P}otentialtheorie}.
\newblock Ausarbeitung einer im Sommersemester 1965 an der Universit\"at
  Hamburg gehaltenen Vorlesung. Lecture Notes in Mathematics, No. 22.
  Springer-Verlag, Berlin-New York, 1966.

\bibitem[BBLU10]{BBLU}
M.~Bramanti, L.~Brandolini, E.~Lanconelli, and F.~Uguzzoni.
\newblock Non-divergence equations structured on {H}\"ormander vector fields:
  heat kernels and {H}arnack inequalities.
\newblock {\em Mem. Amer. Math. Soc.}, 204(961):vi+123, 2010.

\bibitem[BLU07]{BLU}
A.~Bonfiglioli, E.~Lanconelli, and F.~Uguzzoni.
\newblock {\em Stratified {L}ie groups and potential theory for their
  sub-{L}aplacians}.
\newblock Springer Monographs in Mathematics. Springer, Berlin, 2007.

\bibitem[Bon69]{bony_1969}
J.-M. Bony.
\newblock Principe du maximum, in\'egalite de {H}arnack et unicit\'e du
  probl\`eme de {C}auchy pour les op\'erateurs elliptiques d\'eg\'en\'er\'es.
\newblock {\em Ann. Inst. Fourier (Grenoble)}, 19(fasc. 1):277--304 xii, 1969.

\bibitem[CC72]{CC}
C.~Constantinescu and A.~Cornea.
\newblock {\em Potential theory on harmonic spaces}.
\newblock Springer-Verlag, New York-Heidelberg, 1972.
\newblock With a preface by H. Bauer, Die Grundlehren der mathematischen
  Wissenschaften, Band 158.

\bibitem[EK70]{effros_kazdan_1970}
E.~G. Effros and J.~L. Kazdan.
\newblock Applications of {C}hoquet simplexes to elliptic and parabolic
  boundary value problems.
\newblock {\em J. Differential Equations}, 8:95--134, 1970.

\bibitem[EK71]{effros_kazdan_1971}
E.~G. Effros and J.~L. Kazdan.
\newblock On the {D}irichlet problem for the heat equation.
\newblock {\em Indiana Univ. Math. J.}, 20:683--693, 1970/1971.

\bibitem[GL90]{garofalo_lanconelli_kolmogorov}
N.~Garofalo and E.~Lanconelli.
\newblock Level sets of the fundamental solution and {H}arnack inequality for
  degenerate equations of {K}olmogorov type.
\newblock {\em Trans. Amer. Math. Soc.}, 321(2):775--792, 1990.

\bibitem[GS90]{garofalo_segala_wiener}
N.~Garofalo and F.~Seg{\`a}la.
\newblock Estimates of the fundamental solution and {W}iener's criterion for
  the heat equation on the {H}eisenberg group.
\newblock {\em Indiana Univ. Math. J.}, 39(4):1155--1196, 1990.

\bibitem[KL04]{kogoj_lanconelli_2004}
A.E. Kogoj and E.~Lanconelli.
\newblock An invariant {H}arnack inequality for a class of hypoelliptic
  ultraparabolic equations.
\newblock {\em Mediterr. J. Math.}, 1(1):51--80, 2004.

\bibitem[KL07]{link}
A.~E. Kogoj and E.~Lanconelli.
\newblock Link of groups and homogeneous {H}\"ormander operators.
\newblock {\em Proc. Amer. Math. Soc.}, 135(7):2019--2030 (electronic), 2007.

\bibitem[LP94]{lanconelli_polidoro_1994}
E.~Lanconelli and S.~Polidoro.
\newblock On a class of hypoelliptic evolution operators.
\newblock {\em Rend. Sem. Mat. Univ. Politec. Torino}, 52(1):29--63, 1994.
\newblock Partial differential equations, II (Turin, 1993).

\bibitem[LTU16]{ltu_2016}
E.~Lanconelli, G.~Tralli, and F.~Uguzzoni.
\newblock Wiener-type tests from a two-sided gaussian bound.
\newblock {\em Annali di Matematica Pura ed Applicata}, pages 1--28, 2016.
\newblock Article in Press.

\bibitem[LU10]{lanconelli_uguzzoni_2010}
E.~Lanconelli and F.~Uguzzoni.
\newblock Potential analysis for a class of diffusion equations: a {G}aussian
  bounds approach.
\newblock {\em J. Differential Equations}, 248(9):2329--2367, 2010.

\bibitem[Sco81]{scornazzani_cono}
V.~Scornazzani.
\newblock The {D}irichlet problem for the {K}olmogorov operator.
\newblock {\em Boll. Un. Mat. Ital. C (5)}, 18(1):43--62, 1981.

\bibitem[Ugu07]{uguzzoni_cono}
Francesco Uguzzoni.
\newblock Cone criterion for non-divergence equations modeled on {H}\"ormander
  vector fields.
\newblock In {\em Subelliptic {PDE}'s and applications to geometry and
  finance}, volume~6 of {\em Lect. Notes Semin. Interdiscip. Mat.}, pages
  227--241. Semin. Interdiscip. Mat. (S.I.M.), Potenza, 2007.

\end{thebibliography}
\end{document}